\documentclass[11pt,a4paper,final,oneside,openright]{amsart}

 % \documentclass{amsart}
%%%%%%%%%%%%%%%%%%%%%%%%%%%%%%%%%%%%%%%%%%%%%%%%%%%%%%%%%%%%%%%%%%%%%%%%%%%%%%%%%%%%%%%%%%%%%%%%%%%%%%%%%%%%%%%%%%%%%%%%%%%%%%%%%%%%%%%%%%%%%%%%%%%%%%%%%%%%%%%%%%%%%%%%%%%%%%%%%%%%%%%%%%%%%%%%%%%%%%%%%%%%%%%%%%%%%%%%%%%%%%%%%%%%%%%%%%%%%%%%%%%%%%%%%%%%
%\usepackage[top=30mm, bottom=35mm, left=30mm , right=30mm]{geometry}
\usepackage{amsfonts}
\usepackage{amsmath}
\usepackage{amssymb}
\usepackage[utf8]{inputenc}
\usepackage{hyperref}
\usepackage{color}

\usepackage{lipsum}

\setcounter{MaxMatrixCols}{10}
\setcounter{tocdepth}{1}

%TCIDATA{OutputFilter=Latex.dll}
%TCIDATA{Version=5.50.0.2890}
%TCIDATA{<META NAME="SaveForMode" CONTENT="1">}
%TCIDATA{BibliographyScheme=Manual}
%TCIDATA{LastRevised=Wednesday, January 20, 2016 13:52:05}
%TCIDATA{<META NAME="GraphicsSave" CONTENT="32">}

%%%%%%%%%%%%%%%%%%%%%%%%%%%%%%%%%%
%   Lettres
%%%%%%%%%%%%%%%%%%%%%%%%%%%%%

\usepackage{graphicx}

\def\R{\mathbb{R}}

\def\Z{\mathbb{Z}}

\def\SU{\sf{SU}}

%%%%%%%%%%%%%%%%%%%%%%%%%%%%%%%%%%%%%%%%%%

\def\T{\mathbb{T}}

%%%%%%%%%%%%%%%%%%%%%%%%%%%%%%%%%%%%%%%%%%

\def\CC{\mathcal{C}}

\def\Isom{\mathsf{Isom}}

\def\Eins{\mathsf{Eins}}

\def\Sym{\sf{Sym}}

\def\L{\mathcal L}

%%%%%%%%%%%%%%%%%%%%%%%%%%%%%%%%%%%%%%%%%%%%%%%%%%%%%%%%%%%%%%%%%
%   Abbrev.
%%%%%%%%%%%%%%%%%%%%%%%%%%%%%%%%%%%%%%%%%%%%%%%%%%%%%%%%%%%%%%%%%

\def\PSL{{\sf{PSL}}}
\def\GL{{\sf{GL}}}
\def\O{{\sf{O}}}
\def\SO{{\sf{SO}}}
\def\SU{{\sf{SU}}}
\def\SL{{\sf{SL}}}

\def\Conf{{\sf{Conf}} }

\def\ad{\mathsf{ad}}

\def\Ad{\mathsf{Ad}}

\def\c{{\mathfrak{c}}}

\def\p{{\mathfrak{p}}}
\def\k{{\mathfrak{k}}}
\def\t{{\mathfrak{t}}}

\def\g{{\mathfrak{g}}}

\def\so{{\mathfrak{so}}}

\def\h{{\mathfrak{h}}}

\def\l{{\mathfrak{l}}}

\def\r{{\mathfrak{r}}}

%\setcounter{tocdepth}{11}

% \def\alpha*{{\tilde{\alpha}}}

%********* Definitions environement
\newtheorem{theorem}{{Theorem}}[section]
\newtheorem{proposition}[theorem]{{Proposition}}%[section]
\newtheorem{isom.ext}[theorem]{{Trivial isometric extension}}%[section]
%[section]
%[section]
%[section]
%[section]
%[section]
%[section]
\newtheorem{remark}[theorem]{{Remark}}%[section]
%[section]
%[section]
%[section]
%[section]
%[section]
%[section]
%[section]
%[section]
%\newtheorem{step}{{Step}}

%[section]
%[section]
%[section]

%\newenvironment{proof}{{\it Proof.}}{\hfill$\diamondsuit$\medskip}
\newenvironment{sketchproof}{{\it Sketch of
		proof.}}{\hfill$\diamondsuit$\medskip}

\definecolor{purple}{rgb}{0.65,0.12,0.94}
\definecolor{forestgreen}{rgb}{0.4,0.64,0.13}
%\definecolor{marron}{rgb}{0.85,0.26,0.16}

\hypersetup{
	colorlinks=true,
	linkcolor=red,
	filecolor=magenta,      
	urlcolor=black,
}

\begin{document}
\title{Semi-Riemannian metrics on compact  simple Lie Groups}
\author{}

 \author[A. Zeghib]{Abdelghani Zeghib }
 \address{UMPA, CNRS, 
 	\'Ecole Normale Sup\'erieure de Lyon, France}
 \email{abdelghani.zeghib@ens-lyon.fr 
 	\hfill\break\indent
 	\url{http://www.umpa.ens-lyon.fr/~zeghib/}}

\date{\today}
\maketitle

%\maketitle

\baselineskip 0,52cm

 \begin{abstract} This is a survey on left invariant semi-Riemannian metrics on compact Lie groups.

 \end{abstract}

 \tableofcontents
 
 \section{Introduction}
 
 Let $K$ be a Lie group endowed with a semi-Riemannian metric $g$. There is in general two fundamental questions that one can ask 
 in comparing the general situation to the Riemannian one:
 
 \medskip
 
 \begin{enumerate}
 
\item   Is the geodesic flow of $K$ complete, that is every geodesic in $K$ is defined for all time  (as this is the case when $g$ is Riemannian)?
 
 \bigskip
\item Is the isometry group  $\Isom(K, g)$-acting properly on $K$? This means that $\Isom(K, g)$ preserves some auxiliary Riemannian metric, say $\bar{g}$.

 \medskip
 \noindent
 Let us consider the two additional natural following questions:
 \medskip
 
\item By definition (of being  a left invariant metric), $K$ is a subgroup of $\Isom(K, g)$, but, then, what is the full isometry group of $(K, g)$? In particular, is the  isotropy of $1 \in K$  made by automorphisms of $K$?
 
 \bigskip
 
\item When is the conformal group $\Conf(K, g)$ essential, that is its action on $K$ does not preserves  a metric in the conformal class of $g$?  Observe in fact in this case (of left invariant metrics) that 
 non-essential means exactly $\Conf(K, g) = \Isom(K, g)$. 
 
 \end{enumerate}

\subsection{}  \label{extension}  In this note, we will survey this topic, by  focusing    on the case where $K$ is  a compact Lie group.    As said above, 
 $\Isom(K, g)$ is an extension in the diffeomorphism group $\mathsf{Diff}(K)$ of $K$ (where $K$ is seen as a subgroup of 
 $\mathsf{Diff}(K)$,   acting  by  left multiplication on itself). Let us observe however that existence of such extensions of $K$, say by a non-compact group $G$, is     not a 
 surpriszing matter.  Indeed if a semi-simple 
  $G$ is a Lie group, and $K$ is  its maximal compact, then $K$ acts simply transitively on $G/B$ where $B$ is a Borel subgroup of $G$. Thus the left $K$-left-action 
  of $K$ on itself identifies with the the $K$-action on $G/B$ (as $K \subset G$). Thus the $G$-action on $G/B$ is an extension of the $K$-action.  This action preserves some geometric structure, surely of parabolic type.  All  the question now is  to see if 
  this $G$-action on $K$   can preserve a (conformal) semi-Riemannian structure (this is specially related to Item (4) above)?

 %  Since $K$ is compact, 
% the meaning of that  $\Isom(K, g)$ acts properly means that it is compact. 
 
% if  $\Isom(K, g)$ is non-compact, or if it contains properly  contains 
 
\subsection{Results}
 The recent and classical literature  are summarized in  the following results:

\subsubsection{The geodesic flow} Marsden \cite{Marsden} proved that  the geodesic flow of a compact semi-Riemannian homogeneous space $(M, g)$ is complete.  It was observed in \cite{EFSZ}, that there is a Riemannian metric $\bar{g}$ on $M$,  which once seen as a scalar function on 
the tangent bundle $T M$, is a first integral of the geodesic flow of $g$. So, not only the geodesic flow is complete, but its orbits are uniformly bounded.

% \bigskip
 %$\bullet$  
 
 \subsubsection{Left Riemannian metrics}  \label{Compact extension} Regarding Item (3),  
 T. Ochiai and T. Takahashi     proved in \cite{OT}, that if $K$ is  a compact simple Lie group,  and the metric $g$ is Riemannian, then the  identity component of the isotropy group acts by automorphisms. So, up to a finite cover, 
 $\Isom(K, g)$ is contained  in $K \times K$, acting by the left and the right on $K$. 
 
 This beautiful  proof, of topological-algebraic nature,  will be recalled in some details in \S  \ref{Riemannian case}. This result is no longer true in the general semi-simple case, see the example in  \S  \ref{non-simple case} due to 
  Ozeki  \cite{Ozeki} who  proved a generalization   of \cite{OT} which can also help  to handle  the semi-simple case.

 % \bigskip
 %$\bullet$ 
 
 \subsubsection{Non-Riemannian case}
 Regarding Item (2), again if $K$ is simple, but $g$ has any signature, it was  recently proved by 
 Z. Chen,  K. Liang  and F. Zhu, that $\Isom(K, g)$ is compact. In particular, this group preserves 
 a left invariant Riemannian metric, and hence satisfies the previous description.  
 
 The beautiful proof uses  deep results from Gromov's rigid transformation groups theory. 
 We will  show  in \S \ref{pseudo simple} that this result also follows from 
 the simpler and direct techniques by  Baues-Globke-Zeghib \cite{BGZ1} (and \cite{BGZ2}).  
 We will also partially sketch this approach. 

 Semi-Riemannian compact semi-simple non-simple groups can however have non-compact isometry groups, see 
\S \ref{Non simple} for examples.  Actually, results of
 \cite{BGZ1}
 give many details about the semi-simple case, 
and tend to show that this  construction is essentially the unique way  to get examples (of non-compact isometry groups for left invariant metrics on compact semisimple groups).

 \subsubsection{Maximally symmetric metric} The two previous results can be formulated as follows:
 
 \begin{theorem}  \label{maximally symmetric}
 Let $K$ be a compact simple Lie group. Consider $g_K$, its left invariant (in fact also right invariant) 
 metric determined by the Killing form (defined on the Lie algebra $\k$). Then, $g_K$ is maximally symmetric among left invariant metrics, that is, for any left invariant metric $g$ on $K$, 
 $\Isom^0(K, g) \subset \Isom^0(K, g_K)$.  
 
 \end{theorem}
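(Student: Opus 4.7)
The plan is to derive Theorem \ref{maximally symmetric} as a formal consequence of the two main results already surveyed: the Ochiai--Takahashi description of $\Isom^0(K,g)$ in the Riemannian case (\S \ref{Compact extension}) and the Chen--Liang--Zhu compactness result in the general-signature case (\S \ref{pseudo simple}). The strategy is to show that $\Isom^0(K,g)$ always sits inside a group generated by left translations and inner automorphisms, and that this group is in turn contained in $\Isom^0(K,g_K)$ because $g_K$ is bi-invariant.

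First I would reduce the pseudo-Riemannian case to the Riemannian one: by Chen--Liang--Zhu, $\Isom(K,g)$ is compact, so it preserves some auxiliary left invariant Riemannian metric $g'$ on $K$, and in particular $\Isom^0(K,g)\subset \Isom^0(K,g')$. Next, applying Ochiai--Takahashi to $(K,g')$, the identity component of the isotropy at $1\in K$ acts by Lie group automorphisms of $K$. Since $K$ is compact and simple, $\Aut(K)^0=\mathrm{Inn}(K)\simeq K/Z(K)$, and therefore
\[
\Isom^0(K,g)\ \subset\ \Isom^0(K,g')\ \subset\ L(K)\cdot \mathrm{Inn}(K),
\]
where $L(K)$ denotes the group of left translations. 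Under the map $(k,h)\mapsto L_k\circ \mathrm{Inn}(h)$, this is, up to finite cover, the group $K\times K$ acting on $K$ by left and right multiplication.

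It remains to identify (a sufficient piece of) $\Isom^0(K,g_K)$. The Killing form is $\Ad$-invariant, so $g_K$ is both left and right invariant; hence $L(K)$ and the group $R(K)$ of right translations both sit inside $\Isom(K,g_K)$. Furthermore, every Lie algebra automorphism of $\k$ preserves the Killing form, because the latter is defined intrinsically from the bracket; so every $\varphi\in\Aut(K)$ fixing $1$ acts as a linear isometry at the identity, and therefore globally by left invariance. Consequently
\[
\Isom^0(K,g_K)\ \supset\ L(K)\cdot R(K)\ \supset\ L(K)\cdot \mathrm{Inn}(K),
\]
and chaining this with the inclusion of the previous paragraph gives the desired $\Isom^0(K,g)\subset \Isom^0(K,g_K)$.

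The real obstacle is not in this assembly but in the two ingredients being assembled: Chen--Liang--Zhu (which is the deep input, resting either on Gromov's rigid transformation groups machinery or on the more direct Baues--Globke--Zeghib approach sketched in \S \ref{pseudo simple}) and Ochiai--Takahashi. The only genuinely delicate point in the chain above is to ensure that the ``automorphism'' conclusion in Ochiai--Takahashi actually lands in $\Aut(K)^0=\mathrm{Inn}(K)$ rather than in a larger group; this is precisely where simplicity of $K$ enters, forcing $\Aut(K)^0$ to coincide with the inner automorphism group and identifying the ambient envelope of all $\Isom^0(K,g)$ with the bi-translation group, which is manifestly inside $\Isom^0(K,g_K)$.
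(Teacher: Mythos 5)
Your proof is correct and is essentially the paper's own argument: Theorem \ref{maximally symmetric} is presented there precisely as a reformulation of the Chen--Liang--Zhu compactness theorem (average to obtain an auxiliary left invariant Riemannian metric) combined with Ochiai--Takahashi, whose conclusion that $\Isom^0(K,g)$ is, up to finite cover, a subgroup of $K\times K$ places it inside the isometries of the bi-invariant metric $g_K$. The only cosmetic difference is that you route through the ``isotropy acts by automorphisms'' formulation; applying the supergroup statement of Theorem \ref{OT} directly to the connected compact group $\Isom^0(K,g)$ is marginally cleaner, since it sidesteps any concern about non-identity components of the isotropy inside $\Isom^0(K,g')$ (a finite-group subtlety the paper itself flags as open for the full isometry group).
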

 
%  \bigskip
 %$\bullet$ 

 \subsubsection{The conformal group} The new contribution of the present article concerns the conformal  question (Item (4)). Based on the current  project on the Lichnerowicz conformal semi-Riemannian  conjecture, in a homogeneous setting \cite{BDRZ1, BDRZ2, BDZ3}, we get that 
% in the simple case, 
%  if $\Conf(K, g)$ is non-compact, then 
% $(K, g)$ is conformally flat. More generally, for $K$ compact Lie group, 
  if $\Conf(K, g)$ is essential,  then 
 $(K, g)$ is conformally flat.  This happens rarely:

 \begin{theorem} \label{Conformal Theorem}
 Let $K$ be a compact  semisimple Lie group. Assume that $\Conf(K, g)$  is essential.
 Then, up to a finite cover, 
 $K$ is $\SU(2)$ or  $\SU(2) \times \SU(2)$
 % or $\SO(2)  \times \SU(2)$. 
 The conformal group is (up to finite cover) respectively: $\SO(1, 3)$ and  $\SO(4, 4)$.
 %, and $\SO(2, 4)$. 

 \end{theorem}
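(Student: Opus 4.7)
The plan has two moves. \emph{(i) Reduce to conformal flatness.} The essentiality hypothesis, combined with the results on the homogeneous Lichnerowicz conformal conjecture cited in \cite{BDRZ1,BDRZ2,BDZ3}, forces $(K,g)$ to be conformally flat. Let $(p,q)$ denote the signature, with $p+q=\dim K=n\geq 3$. \emph{(ii) Rigidly compare $K$ with the Einstein universe model.} This is the bulk of the argument.

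The conformal structure yields a developing map $\mathrm{dev}\colon \widetilde{K}\to \widetilde{\Eins}^{p,q}$, a local diffeomorphism into the universal cover of the model. Since $K$ is compact semisimple, $\pi_1(K)$ is finite and $\widetilde{K}$ is compact. The image of $\mathrm{dev}$ is therefore simultaneously open (local diffeomorphism) and compact (continuous image of a compact), hence closed; by connectedness it is all of $\widetilde{\Eins}^{p,q}$. In particular $\widetilde{\Eins}^{p,q}$ must itself be compact. This \emph{rules out Lorentzian signatures}: for $(1,n-1)$ or $(n-1,1)$ the universal cover is $\R\times S^{n-1}$, which is non-compact. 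The surviving compact models are $\widetilde{\Eins}^{n,0}=S^n$ (Riemannian case) and $\widetilde{\Eins}^{p,q}=S^p\times S^q$ with $p,q\geq 2$. Finally, $\mathrm{dev}$ is a covering of simply connected spaces, hence a diffeomorphism.

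As a simply connected compact semisimple group, $\widetilde{K}$ is a product of simply connected compact simple Lie groups. A classical theorem states that the only spheres admitting a Lie group structure are $S^0,S^1,S^3$; hence among such simple factors only $\SU(2)\cong S^3$ is a sphere. Matching $\widetilde{K}$ against the compact models from the previous paragraph leaves exactly two possibilities: $\widetilde{K}=\SU(2)$ with $(p,q)=(0,3)$ or $(3,0)$, or $\widetilde{K}=\SU(2)\times\SU(2)$ with $(p,q)=(3,3)$. The conformal group of the model $\Eins^{p,q}$ is $\SO^0(p+1,q+1)$, which yields $\SO(4,4)$ in the six-dimensional case, in agreement with the statement. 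The main technical difficulty is the developing-map step: one must confirm the global existence of $\mathrm{dev}$ on the universal cover and the persistence of essentiality there, both of which are the substance of the cited \cite{BDRZ1,BDRZ2,BDZ3} machinery; once these are in hand, the open-compact-connectedness dichotomy combined with the Hopf-type rigidity above finishes the argument.
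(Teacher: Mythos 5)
Your steps (i) and the developing-map argument in (ii) match the paper's route exactly: essentiality plus the homogeneous Lichnerowicz results give conformal flatness, and then compactness and simple connectedness of $\widetilde{K}$ force the developing map to be a diffeomorphism onto a compact, simply connected $\widetilde{\Eins}^{p,q}$, excluding the Lorentzian case. The gap is in the final identification. From ``the only spheres carrying a Lie group structure are $S^0,S^1,S^3$'' you conclude that the only compact simply connected semisimple groups diffeomorphic to $S^p\times S^q$ are $\SU(2)$ and $\SU(2)\times\SU(2)$. That does not follow: the diffeomorphism $\widetilde{K}\cong S^p\times S^q$ has no reason to respect the decomposition of $\widetilde{K}$ into simple factors, so a priori a \emph{simple} group could be diffeomorphic to a \emph{product} of two spheres, neither factor of which is a group. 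This is not a vacuous worry: $\SU(3)$, $\Sp(2)$ and $G_2$ have the rational cohomology (indeed the cohomology ring) of $S^3\times S^5$, $S^3\times S^7$ and $S^3\times S^{11}$ respectively, so your stated classical fact cannot see them. (Also, the case $p=2$ survives your filter ``$p,q\geq 2$'' and must be killed separately.)

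The paper closes exactly this hole with two inputs you omit. First, Bott's computations: $\pi_2$ of a compact Lie group vanishes (ruling out $p=2$ or $q=2$) and $\pi_3(\widetilde{K})\cong\Z^k$ with $k$ the number of simple factors; applied to $S^p\times S^q$ this forces $(p,q)=(0,3)$, or $p=3$ with $q\geq 3$, and moreover $K$ is simple when $q>3$ and has exactly two simple factors when $q=3$ (whence, by a dimension count, both factors are $\SU(2)$). Second, for the residual case $K$ simple and diffeomorphic to $S^3\times S^q$ with $q>3$, the paper uses the conformal identification itself: the transitive left action of $K$ embeds $K$ into $\O(4,q+1)$ and hence, up to conjugacy, into the maximal compact $\O(4)\times\O(q+1)$; since $\dim K=3+q>6=\dim\O(4)$ and $K$ is simple, the projection to the $\O(4)$ factor is trivial, so $K$ cannot act transitively on $S^3\times S^q$ --- contradiction. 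You need an argument of this type (or a direct homotopy-theoretic one, e.g.\ $\pi_4$ or $\pi_6$ comparisons) before ``matching against the compact models'' is legitimate. The rest of your proposal is sound.
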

 
% \begin{remarks} ${}$

  It  would be really  interesting to see if this  result can be  proved ``algebraically'', that is without using 
   the results  on the homogeneous Lichnerowicz conjecture.
   % (as we will indeed do in \S \ref{Conformal}).

 \subsubsection{Terminology: Supergroup extensions} In light of  
 \S \ref{extension} and \S \ref{Compact extension}, it becomes natural to call 
 a supergroup  (or maybe a supergroup extension) of $K$, a group $G$ that contains $K$ such that  $K$ acts freely and transitively on some homogeneous space $G/H$.
 
 The result mentioned in  \S  
  \ref{Compact extension}
 means exactly that a compact simple Lie group has a maximal compact  supergroup  which is 
 $K \times K$ (up to finite covers).  
 
 % One step in the proof of compactness of $\Iso(K, g)$ in the pseudo-Riemannian case, will be that 
%  any supergroup of $K$ is simple. 
 
   %\subsubsection{}  
   
 %  \medskip 
   As example,   $\SL_2(\R)$ is a supergroup extension of $\SO(2)$, since $\SO(2)$ acts 
   simply transitively on the circle which is a homogenous space of $\SL(2, \R)$. 
   However, $\SL_3(\R)$ is not a supergroup of $\SO(2)$ since it can not act (non-trivially) on 
   the circle (it is know that the unique simple Lie groups acting non-trivially the circle are covers of $\PSL(2, \R)$).  
   
   %(I don't have a rigorous proof for the SL_3 claim).
   
  One general construction of supergroups goes as follows (for any $K$).  
    Let $\rho: K \to \GL(F)$ be a representation, where $F$ is a finite dimensional subspace of functions on $K$. In other words $F$ is a $K$-invariant finite dimensional subspace in the space of all smooth functions on $K$ (endowed with its usual action). Let $\T$ be a circle   in $K$,  say given by a one parameter subgroup $t \to \exp t u$. 
  Consider the $F$ action on $K$,  defined by $f. k =   k \exp f(k) u \in K$. For $f $ constant, we get the $\T$-action by the right on $K$. Combining with the left $K$-action, we get a transitive action of  the semi-direct product $K \ltimes_{\rho} F$. 
  
  A similar construction is available with $\T$ replaced by a higher dimension torus $\T^d$.

  This is an example of a supergroup which always exists. It is interesting to see when this could preserve a semi-Riemannian metric?  In fact, results of \cite{BGZ1} say essentially that all  semi-Riemannian supergroups of 
 a compact semi-simple (non necessarily simple) group, are of this type.

 %\end{remarks}
  
       \section{Riemannian case} \label{Riemannian case} 
     
     \begin{theorem}  \cite{OT} \label{OT} Let $K$ be a connected  compact simple Lie group. Let $G$ be a supergroup of $K$, that is 
     $G$ acts faithfully  on $K$, and contains a copy of $K$ whose action identifies to the left action of $K$ on itself. 
      If $G$ is  connected and compact, then   a finite cover of $G$ 
     is a subgroup of $K \times K$. 
     
     \end{theorem}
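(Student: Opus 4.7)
\begin{sketchproof}
The plan is to extract the diffeomorphism $G\approx K\times H$ from the simple transitivity of $K$, use the cohomology of compact connected Lie groups to pin down the rank, and then force the isotropy at $1_K$ to act by inner Lie group automorphisms of $K$.

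Setting $H:=\Stab_G(1_K)$, simple transitivity yields $K\cap H=\{e\}$ and a unique factorization $g=kh$ for every $g\in G$, so $(k,h)\mapsto kh$ is a diffeomorphism $K\times H\to G$. By Hopf's theorem, the rational cohomology of any compact connected Lie group is a free exterior algebra on odd-degree generators whose number equals the rank; the K\"unneth formula then gives $\rk G=\rk K+\rk H$, and the product of maximal tori $T_K\cdot T_H$ is a maximal torus $T_G\cong T_K\times T_H$ of $G$.

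The heart of the argument is to show $K\lhd G$. Decomposing $\g=\k\oplus\h$ as vector spaces and writing the mixed bracket as $[Y,X]=\alpha(Y,X)+\beta(Y,X)$ for $Y\in\h$, $X\in\k$, with $\alpha\in\k$ and $\beta\in\h$, one must prove $\beta\equiv0$. The Jacobi identity applied to triples $(Y,X_1,X_2)\in\h\times\k\times\k$ shows that the map $X\mapsto(Y\mapsto-\beta(Y,X))$ is a Lie algebra representation $\k\to\End(\h)$, which by simplicity is either trivial (the desired conclusion) or faithful. To rule out the faithful alternative, I would pass to the simply connected cover $\tilde G=T^c\times G_1\times\cdots\times G_m$, decomposed into central torus and simple compact simply connected factors. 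Each projection of $\tilde\k$ onto $G_i$ is zero or an isomorphism onto its image. A case analysis, using the faithfulness of the $G$-action on $K$ to forbid any nontrivial normal subgroup of $\tilde G$ inside $\tilde H$, should conclude that either $\tilde\k$ equals a single $G_i$ (so $\k\lhd\g$ directly), or $\tilde\k$ sits diagonally in exactly two factors both isomorphic to $\tilde K$, in which case $\tilde G$ is itself $\tilde K\times\tilde K$. All other configurations are to be excluded by faithfulness.

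Once $K\lhd G^0$, the conjugation action of $H^0$ on $K$ factors through $\Aut(K)^0=\Int(K)=K/Z(K)$, since $K$ is simple. Lifting to a finite cover gives a homomorphism $H^0\to K$, $h\mapsto m_h$; combined with the left-$K$ factor, a finite cover of $G^0$ embeds into $K\times K$ via $(k,h)\mapsto(km_h,m_h)$, acting on $K$ by $x\mapsto km_hxm_h^{-1}$, and the finite component group $\pi_0(G)$ is absorbed by a further finite cover. The main obstacle is the normality step: the case analysis ruling out embeddings of $\tilde\k$ across three or more simple factors is the delicate point, requiring careful use of the faithfulness hypothesis to produce a forbidden normal subgroup of $\tilde G$ inside $\tilde H$ in each bad configuration.
\end{sketchproof}
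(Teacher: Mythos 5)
Your skeleton tracks the paper's proof closely: the factorization $G\approx K\times H$ from simple transitivity, a topological constraint via K\"unneth, a case analysis on the simple factors of the simply connected cover, and the endgame in which $H^0$ acts on $K$ through $\Aut(K)^0=\Int(K)=K/Z(K)$, yielding the embedding of a finite cover into $K\times K$. The endgame is correct. But there is a genuine gap exactly where you flag ``the delicate point'': the case analysis is not carried out, and the topological invariant you extract is not strong enough to carry it out. Rank additivity ($\rk\, G=\rk\, K+\rk\, H$ via Hopf) does not control the number of simple factors of $H$. What the paper uses is Bott's computation that $\pi_3=\Z$ for each simply connected compact simple group (equivalently, $b_3$ counts simple factors), so that $\pi_i(G)=\pi_i(K)\times\pi_i(H)$ forces $H$ to be semisimple with exactly one simple factor fewer than $G$. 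That count, combined with $\dim H=\dim G-\dim K$ and the fact that $K$ must project nontrivially onto every factor of $G$, is what closes the bookkeeping: any factor $G_i$ with $\dim G_i>\dim K$ is forced to lie inside $H$ (contradicting faithfulness), so every factor is isogenous to $K$ and $H\cong K^{d-1}$; the case $d\geq 3$ dies because commuting copies of the nonabelian $K$ inside $K^d$ force a whole factor of $G$ into $H$; and $d=2$ dies because two transversal ``oblique'' copies of $\k$ in $\k\oplus\k$ must in fact intersect. None of these steps appears in your writeup; ``all other configurations are to be excluded by faithfulness'' is essentially the statement to be proved, not an argument.

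Three smaller points. First, $T_K\cdot T_H$ need not be a torus, since $K$ and $H$ need not commute elementwise; you do not use this later, but the claim is unjustified. Second, your observation that $\beta$ defines a $\k$-representation on $\g/\k\cong\h$ that is trivial or faithful is correct and pleasant, but you never exploit the faithful branch -- it is abandoned in favor of the (unexecuted) factor analysis, so it does no work. Third, your claimed dichotomy retains the possibility that $\tilde\k$ is diagonal in $\tilde K\times\tilde K$; in that branch $K$ is not normal in $G$, so the step ``Once $K\lhd G^0$'' does not apply and the branch needs its own (one-line) treatment -- or better, it should be excluded outright, as the paper does by showing the graphs of $K$ and $H$ in $\k\oplus\k$ cannot be transversal.
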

     
    The  $K\times K$-action  on $K$, by the usual rule $(k_1, k_2)x = k_1 x k_2^{-1}$, has as kernel 
    $Z \times Z$, where $Z$ is the center of $K$.   Therefore, the theorem says that a supergroup is contained in 
    $K \times K/ Z\times Z$.
     
     \bigskip
     
     \begin{sketchproof} 
         \medskip
 
     $\bullet$ Let $H$ be the stabilizer of 1.  Then $G$ is   naturally homeomorphic to the product 
     $K \times H$. Indeed, $g.1$ equals $k \in K$, and hence $k^{-1} g = h$, for some $h \in H$. This coherently defines a bijective map $g \to (k, h)$, which is naturally continuous.

          \medskip
     $\bullet$   As a compact group,  the universal cover of $G$ decomposes as a direct product of  compact simple groups
  and an abelian group covering  a toral factor. Recall  here that a compact  group has a finite fundamental group exactly if it is 
  semi-simple. Since $K$ is simple, it has no non-trivial homomorphism to an abelian group, and hence,
  its universal cover is contained in the product of simple factors. If we change $G$ accordingly, that is we remove the toral factor from it, 
  we do not change our problem, that is this new  $G$ is still a supergroup of $K$. So, we will henceforth assume that $G$ is semi-simple.

         \medskip 
  $\bullet$    We  have equality of  homotopy groups: $\pi_i(G) = \pi_i(K) \times \pi_i(H)$.
  
  We deduce first that $\pi_1(H)$ has a finite fundamental group and is thus semi-simple. 
     
     At this stage, we can, and will, assume that all groups $G, H$ and $K$ are simply connected (and hence in particular admit direct decompositions into simple factors). 
   
        \medskip
   $\bullet$   Now, we recall that, for a simply connected simple Lie group, $\pi_2 = 1$, and $\pi_3 = \Z$. 
   This was proved by R. Bott as a corollary of the main results of \cite{Bott}), and as application of Morse theory
   to the topology of Lie groups. We don't know if a direct proof is available. 
   % (page 253)

   Thus, 
    for a  simply connected compact semi-simple group, its  $\pi_3$-group is $\Z^d$, where $d$
   is the number of its simple factors.

   Therefore, if $G$ has $d$ factors, then $H$ has $(d-1)$ factors.
   %  ideals in the decomposition of its Lie algebra, or equivalently simple factors in its universal cover. 
     
  % From this we infer that $\tilde{H}$ has $d-1$ factors.

          \medskip
     $\bullet$  Write $G = G_1 \times \ldots \times G_d$.  If $K$ projects trivially on some factor $G_i$, then we can remove this factor without changing our problem (that is we will still have a supergroup of $K$). So assume $K$ projects injectively in each $G_i$, in particular 
   for any $i$,   $\dim G_i \geq \dim K$, and $G_i$ isomorphic to $K$, in case of equality of dimensions.

          \medskip
     $\bullet$ We have $\dim H = \Sigma \dim G_i - \dim K$. So, if $\dim G_i > \dim K$, then $\dim G_i + \dim H > \dim G$, which implies
     that $G_i \cap H \neq 1$ (this happens at the Lie algebra level, and then applies to groups too).

          \medskip
     $\bullet$  Any non-trivial $G_i \cap H$ will be a non-trivial normal subgroup of $H$, and is  hence product of factors of $H$.

          \medskip
     $\bullet$  Change notations and write $G = A_1  \times \ldots  \times A_a \times B_1 \times  \ldots \times  B_b$, where $\dim A_i > \dim K$, and $B_i$ is isomorphic to $K$, for any $i$.      
     
          \medskip
     $\bullet$  Each $A_i \cap H$ is a product of factors of $H$. 
     %Any other factor of $H$ 
     
        \medskip
     $\bullet$ Consider the set $\Sigma$ of factors $H_j$ of $H$  that are contained in $A_1 \times \ldots  \times A_a$. There are at least 
     $a$ elements of $\Sigma$. Their contribution in $\dim H$ is at most $\Sigma \dim A_i$. 
     
     Remember that $H$ has $d-1 = a+b - 1$ factors, so it remains at most $b-1$
   factors  $H_j$ not in $\Sigma$. Any such $H_j$  projects non-trivially on  some $B_i$, and hence is isomorphic to 
     $K$.   The total contribution  
  of such factors in $\dim H$ is  thus at most $(b-1) \dim K$. But $\dim H= \dim G - \dim K = \Sigma \dim A_i + (b-1) \dim K$. 
  It follows that $A_1  \times \ldots  \times A_a $ is contained in $H$.

        \medskip
     $\bullet$ Remember however that $H$ is the isotropy of the $G$-action on $K$, and by the faithfulness (tacit) hypothesis, 
     $H$ contains no normal subgroup of $G$. We infer from all this, that all the $G$-factors are isomorphic to 
     $K$ and none of them intersects non-trivially $H$. Say $G = A^d$, with $A$ isomorphic to $K$

        \medskip
     $\bullet$ $H$ has $(d-1)$ factors, all embed in $A$, but since $\dim H = \dim G - \dim A = (d-1) \dim A$, each of these factors is isomorphic to $A$, that is $H$ is isomorphic to $K^{d-1}$.

        \medskip
     $\bullet$ To fix ideas, assume $d = 3$. So $G = K^3$, and $H \cong K^2$ embeds in $K^3$. This consists in two copies of $K$ in $K^3$ which commutes. So their projections on each factor of $K^3$ commute. But such a projection is either $\{1\}$ or $K$. It cannot be $K$ since 
     $K$ is not commutative. This implies these two copies cannot have same non-trivial projection on a factor of $K^3$. Hence 
     at least one of these copies is a factor of $K^3$. But then, the isotropy $H$ contains a normal subgroup of $G$ which contradicts faithfulness. This argument applies in a similar way to any situation $d \geq 3$. 
     
        \medskip
     $\bullet$ It remains to consider the case $d = 2$, so $G = K \times K$,  and $K$ and $H$ embeds ``obliquely'' in $K\times K$, and so 
     each of them is the graph of a homomorphism $K \to K$. The same applies to their Lie sub-algebras. They  are graphs in $\k \oplus \k$
     of derivations $d_1, d_2: \k \to \k$.  The intersection of these graphs consists of vectors of the forms 
     $u \oplus d_1(u) \in \k \oplus \k$, such that $d_1(u) = d_2(u)$. But $d_1-d_2$ is a derivation of $\k$, and since $\k$ is semi-simple, 
     $d_1-d_2=  \ad_w$, for some $w \in \k$. In particular $d_1(w) = d_2(w)$, and the two graphs have a non-trivial intersection. 
     This contradicts that     fact that $G$ equals $KH$  which implies the sub-algebras of $K$ and $H$ are transversal. 
     
         \medskip
     $\bullet$ All this implies that $K$ is in fact a factor of $G$.  So, all the other factors  of $G$ commute with $K$, and thus their action consist  in right multiplication,  and so, the $G$-action on $K$ transits via $K \times K$ (up to a cover). 
      \end{sketchproof}

  \section{Conformal Group, Proof of Theorem \ref{Conformal Theorem}}
\label{Conformal}

  Recall that $\Eins^{p, q}$
  is the substratum of the flat conformal semi-Riemannian geometry of signature $(p, q)$. One model of it can be defined as follows. Consider the pseudo-Euclidean space $\R^{p+1, q+1}$, and $\CC^{p+1, q+1}$ its light cone (the space of isotropic vectors). Then, $\Eins^{p, q}$ is the quotient of the light cone by the radial $\R^+$-action. 
  
   One sees in particular that $\Eins^{p, q}$ the topology of $\mathbb S^p \times \mathbb S^q$.
 
  The
  orthogonal group $\O(p+1, q+1)$ of $\R^{p+1, q+1}$ acts conformally on $\Eins^{p, q}$, and in fact equals it full conformal group (for $p+q>2$). 
  
  A  semi-Riemannian conformally flat manifold of signature $(p, q)$ is modelled on  $\Eins^{p, q}$, and conversely. In other words being conformally flat is equivalent of  having a $(G, X)$-structure, for 
  $X = \Eins^{p, q}$, and $G = \O(p+1, q+1)$. 
  .    
  
  By the results of \cite{BDRZ1, BDRZ2, BDZ3}, if $\Conf(K, g)$ is essential, then $(K, g)$ is conformally flat. 
  So, we have a developing map $\tilde{K} \to \widetilde{\Eins^{p, q}}$, where $(p, q)$, $q \geq p$,  is the signature of $g$.
 % , and 
  %$\Eins^{p, q}$, is the Einstein semi-Riemannian universe of signature $(p, q)$. 
  
 Since semisimple,  $K$ has a finite fundamental group, so up to a cover, we can assume $K$ is simply connected.

 The developing map is a local diffeomorphism, and $K$ is compact and simply connected, if follows that it is a covering, and that $K$ is the universal cover of $\Eins^{p, q}$. This implies that $p \neq 1$, since $\Eins^{1, q}$ has 
  a non-compact universal cover, and that $d$ is a diffeomorphism, since $\Eins^{p, q} $ is simply connected for $p \neq 1$. 
  
  So, from the topological viewpoint, $K$ is a semi-simple Lie group diffeomorphic to $\mathbb S^p \times \mathbb S^q$ $p \leq q$). 
  
  As recalled in Section  \ref{Riemannian case}, a semi-simple Lie group $K$ satisfies $  \pi_2(K) = 1$, and $\pi_3(K) \neq 0$ \cite{Bott}. 
  In fact, $\pi_3(K) = \Z^k$, where $k$ is the number of simple factors of $K$.

    It follows that 
  either  $(p, q)$ is either $(0, 3)$   or $p = 3$ and $q \geq 3$.
   Let us consider the case $p = 3, q \geq 3$, the other case   being   easier to handle.  
Therefore,  either $q > 3$, and then $K$ is simple, or $q = 3$, and $K$ has two simple factors. 
  
  $(K, g)$ is conformally isomorphic to $\Eins^{p, q}$, so its conformal group embeds in $\O(p+1, q+1)$. 
  In particular, the $K$-left action on itself gives a transitive action on $\Eins^{p,q}$, say via an embedding $h: K \to 
  \O(p+1, q+1)$.  Up to conjugacy,  $h$ has values in the maximal compact subgroup 
  $\O(p) \times \O(q)$. Write $h = (h_1, h_2)$.  
     
%Observe that  $h(K)$, the image of $K$ by the holonomy homomorphism is a compact subgroup of 
  %$\SO(3+1, q+1)$, and hence up to conjugacy, can be assumed inside the maximal group 
  %$\SO(4) \times \SO(q+1)$, that is $K$ acts simply transitively on $\mathbb S^p \times \mathbb S^q$, via  $h = (h_1, h_2)$.  
  
  Assume $q >3$, so $\pi_3(K) = \Z$  (since diffeomorphic to $\mathbb S^p \times \mathbb S^q$). Hence $K$ is simple. Necessarily, one of the homomorphisms $h_1$ or $h_2$ is trivial. But then 
  $h(K)$ does not act transitively on $\mathbb S^3 \times \mathbb S^q$.  
  
  From this, we infer that $q= 3$, and 
  $K$ has two simple factors. In this case $h = (h_1, h_2)$ maps $K$ to $\SO(4) \times \SO(4)$. If $K$ acts transitively 
  on $\mathbb S^3 \times \mathbb S^3$, then none of the $h_i$ is trivial.   Recall here  that up to finite cover 
  $\SO(4) = \SO(3) \times \SO(3)$. As $h(K)$ has exactly two simple factors,  each of  factors must be, up to finite covers, 
  $\SO(3)$. One can also see that, up to finite covers, $h(K)$ is a  product  of two copies of $\SO(3)$, each contained
  in one factor $\SO(4)$ (of the maximal compact). $\Box$

%  then necessarily $h(K)$ has a factor in each $\SO(4)$ 

        \section{Non Riemannian   case}  \label{pseudo simple}
        
     Compact simply connected homogeneous semi-Riemannian manifolds were studied in 
     \cite{BGZ1} (and also the unpublished \cite{BGZ2} which becomes then part of \cite{BGZ1}).  The principal result is stated as follows:

\begin{theorem} \label{BGZ-Theorem}
Let $M$ be a connected and simply connected pseudo-Riemannian
homogeneous space of finite volume, $G=\Isom(M)^\circ$,
and let $H$ be the stabilizer subgroup in $G$ of a point in $M$. 
%The semi-simple Levi factor is compact. 
Let $G= C R$ be a Levi decomposition, where $R$ is the solvable radical
of $G$.
Then:
\begin{enumerate}
\item
$M$ is compact.
\item
$C$ is compact and acts transitively on $M$.
\item
$R$ is abelian.
Let $A$ be the maximal compact subgroup of $R$. Then $A=\mathrm{Z}(G)^\circ$, the identity 
component of the center of $G$.
More explicitly, $R=A\times V$ where $V\cong\R^n$ and $V^{C}= 0$ (that is the $C$-representation has no 
 factor where it acts trivially). 
\item
$H$ is connected. 
If $\dim R>0$, then $H=(H\cap C) E$, where $E$ and $H\cap C$ are
normal subgroups in $H$, $(H\cap C)\cap E$ is finite,
and $E$ is the graph of a non-trivial homomorphism
$\varphi:R\to K$, where the restriction $\varphi|_A$ is injective.
\end{enumerate}
\end{theorem}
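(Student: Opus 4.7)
The plan is to pass from the geometric hypothesis (finite volume, simply connected, pseudo-Riemannian, homogeneous) to algebraic constraints on the pair $(\g, \h)$, where $\g = \Lie G$ and $\h = \Lie H$. The basic dictionary is: the tangent space $\m = \g/\h$ carries an $\Ad(H)$-invariant non-degenerate symmetric bilinear form, equivalently $\g$ admits an $\ad(\h)$-invariant symmetric bilinear form $\beta$ whose radical is exactly $\h$; and finite volume of $G/H$ forces the unimodularity condition $\Delta_G\vert_H = \Delta_H$. These two ingredients, combined with the Levi decomposition $\g = \c + \r$, will drive everything.

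First I would analyze the solvable radical using $\beta$. Let $\n \subseteq \r$ be the nilradical. Since $\ad_\g(x)$ is nilpotent for every $x \in \n$, and $\beta$ is $\ad(\h)$-invariant, a standard Lie-algebraic argument (essentially Engel-type reasoning applied to the $\beta$-orthogonality relations) shows that the subspace $\n$ is $\beta$-isotropic against a large complement. Using the fact that $\h$ is the full radical of $\beta$ and that the $\ad$-representation of $\n$ on $\g$ must preserve $\beta$ modulo $\h$, one obtains that the non-nilpotent part of $\ad(\r)$ is forced to lie in $\h$, so $\r/(\r \cap \h)$ has abelian image in $\g$; the finite-volume/unimodularity constraint rules out the remaining solvable non-abelian configurations. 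This step I expect to be the main obstacle, because it requires balancing three constraints simultaneously (invariance of $\beta$, nilpotency of $\ad(\n)$, and unimodularity) and accounts for the heart of the Baues--Globke--Zeghib analysis.

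Next I would obtain statements (1) and (2). Once $\r$ is abelian, write $\r = \a \oplus \v$ with $\a$ the Lie algebra of the maximal compact subgroup $A \subset R$ and $\v$ the vector complement. Projecting out trivial $C$-summands of $\v$ into $\h$, I may assume $V^C = 0$, which forces the $C$-orbit through $eH$ to fill $M$: indeed, $\g = \c + \h$ because any $v \in \v$ with trivial $C$-action could be subtracted off into $\h$. Hence $C$ acts transitively on $M$, giving (2). Compactness of $M$ then comes from compactness of $C$ (non-compact simple factors of a Levi would contradict the existence of a $C$-invariant finite volume together with $H \cap C$ being compact, which follows from the Riemannian case of \S\ref{Riemannian case} applied to $C \cdot eH$), whence (1). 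At the same time, one concludes $A = \mathrm{Z}(G)^\circ$, since $\a$ centralizes $\c$ (as the $C$-action on $\a$ preserves a Euclidean form and the compact part of an abelian action is central).

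Finally, for (4), connectedness of $H$ follows from the long exact sequence of the fibration $H \hookrightarrow G \to M$ combined with $\pi_1(M) = 1$ and the known fact that $\pi_1(G)$ surjects onto $\pi_0(H)$. The graph structure of $E$ is then read off from the fact that $H$ cannot contain $V$ (faithfulness plus $V^C = 0$) and cannot contain $A$ (faithfulness), so the projection of $H$ to $R$ must be the graph of a homomorphism $\varphi: R \to K$ for some compact subgroup $K \subset C$; injectivity of $\varphi\vert_A$ and the decomposition $H = (H \cap C) E$ are forced by counting dimensions and by normality of the kernel of the $G$-action. The subtle point here, worth singling out, is verifying that $H \cap C$ and $E$ are both normal in $H$ with finite intersection; this uses that $[E, H \cap C]$ lies in the graph and in $C$ simultaneously, hence is trivial.
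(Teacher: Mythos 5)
Your outline identifies the right skeleton (an $\ad(\mathfrak{h})$-invariant form with radical $\mathfrak{h}$, unimodularity, Levi decomposition), but it omits the one idea that actually makes Theorem \ref{BGZ-Theorem} go through, and at the key junctures it substitutes arguments that are circular or misapplied. The central mechanism of the Baues--Globke--Zeghib proof is not the single form $\beta=m(x_0)$ at the base point: it is the $G$-equivariant family $x\mapsto m(x)\in\mathrm{Sym}(\mathfrak{g})$ (together with the companion map $x\mapsto\mathfrak{h}(x)$ into the Grassmannian) obtained by pulling back the metric through the evaluation maps at \emph{every} point of $M$. Because the image of $m$ is compact (resp.\ carries a finite invariant measure in the finite-volume case), any one-parameter subgroup $e^{tA}$ with $A$ nilpotent acts on that image with polynomially growing, hence bounded, hence constant orbits (resp.\ with recurrent orbits, which for a polynomial flow again forces fixed points via Poincar\'e recurrence). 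This is what kills the non-compact semisimple factors and constrains the nilradical: one gets $[\mathfrak{s},\mathfrak{h}(x)]\subset\mathfrak{h}(x)$ for all $x$, extracts a common ideal inside every isotropy algebra, and contradicts faithfulness. Your ``Engel-type reasoning applied to the $\beta$-orthogonality relations'' at the base point cannot replace this: $\beta$ is only $\ad(\mathfrak{h})$-invariant, not $\ad(\mathfrak{g})$-invariant, so pointwise data gives no control in the unipotent directions transverse to $\mathfrak{h}$. You flag this step as ``the main obstacle'' but do not supply the idea that overcomes it.

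Two further points do not stand as written. First, the derivation of (1)--(2) is circular: you invoke $V^C=0$ (part of conclusion (3)) and compactness of $C$ (the hard part of conclusion (2)) to prove transitivity and compactness; and the claim that $H\cap C$ is compact ``by the Riemannian case of \S\ref{Riemannian case}'' is a misapplication --- the Ochiai--Takahashi theorem concerns compact connected supergroups of a compact simple group, whereas here the induced metric on the orbit $C\cdot eH$ need not be Riemannian and $C$ is not yet known to be compact. The correct order is to first exclude non-compact Levi factors by the dynamical argument above, then deduce $H\subset C_{\mathrm{cpt}}R$ and use the resulting fibration of $M$ over the non-compact factor to kill it. Second, ``the projection of $H$ to $R$ must be the graph of a homomorphism $\varphi:R\to K$'' is garbled (a projection into $R$ is a subgroup of $R$, not a graph; it is $E$ itself that is the graph, so that $E$ surjects onto $R$), and surjectivity of $H\to R$ is precisely the non-trivial content equivalent to transitivity of $C$, which again rests on the missing dynamical step. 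The part of your argument that does work as stated is the connectedness of $H$, via the homotopy exact sequence of $H\to G\to M$ together with $\pi_1(M)=1$ and $\pi_0(G)=1$.
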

 
      \subsection{Sketch} Let us give some hints on the proof of this result, especially the fact that 
      $G$ has no non-compact semisimple factor (Item (2) in the Theorem).  So $G$ acts on $M$ transitively. For $X $
      in the Lie algebra $\g$, let $\bar{X}$ be the associated vector field on $M$. For $x \in M$, we define a degenerate metric 
      $m(x)$ on $\g$, using the pull back by  evaluation  map $X \in \g \to  \bar{X}(x) \in T_xM$, that is 
      $m(x)(X, Y) = g_x(\bar{X}(x), \bar{Y}(x))$ ($g$ is the given  semi-Riemannian metric on $M$). Let also $\h(x)$  be the Lie algebra of the stabilizer 
      of $x$. Observe that the Kernel of $m(x)$ is exactly $\h(x)$. Like this, one define maps $m: M \to \Sym(\g)$
      and $l: M \to \L(\g)$, where $\Sym(\g)$ is the space of quadratic forms on $\g$
      and $\L (\g)$ is the Grassmann of linear $d$-subspaces of $\g$, where $d = \dim \h(x)$ (for any $x$).
      
      The point is that $m$ and $l$ are equivariant with respect to the action of $G$ on $M$, and its natural action of 
      $\Sym(\g)$ and $\L(\g)$.

      Let us assume here that $M$ is compact (instead of the slightly more general hypothesis $M$ of finite volume as in the theorem). 

The image $Z = m(M) \subset \Sym(\g)$ is in particular  invariant under the linear $G$-action. So we are in a situation of a compact set $Z $ in a linear space, say $\R^N$, invariant by a subgroup $ G \subset \GL(N, \R)$. Let $p(t) = e^{t A}$ be a one parameter group in $G$. 
Then, for any $z \in Z$, $p(t) z$ is bounded when $t \to \infty$. Assume $A$ nilpotent, then 
$p(t) = 1 + t A + (t^2/2) A^2+ \ldots + (t/ N!) A^N$. Clearly, $p(t)z $ bounded, implies $A(z) = 0$, that is $p(t) z = z, \forall t$. If $S $ a subgroup of $G$ is generated by such one parameter groups, with $A$ nilpotent, then $S$ acts trivially on $Z$. This applies in particular to 
the semi-simple factor of $G$ of non-compact type, as well as  to the nilradical of $G$.

The case of the map $l$ is more complicated since it  has values in a Grassmann space $V$, which is compact. In this case, one 
uses another dynamical idea. Again assuming $A$ nilpotent, then a point $v$ is recurrent if there is  a sequence $t_n \to \infty$, such that 
$p(t_n) v \to v$.  One concludes in this case that $p(t)v = v$. Since the $G$-action on $M$ preserves the semi-Riemannian measure, there is a  $G$-invariant measure with full support in the image of $l$. 
By Poincar\'e recurrence Theorem, almost all points are recurrent. 
Therefore, we have the same conclusion that $S$ acts trivially 
on the image of $l$, once it is generated by one parameter groups with nilpotent infinitesimal generator. 

Let $S$ be a semi-simple factor of   non-compact type, and $\mathfrak{s}$ its Lie algebra. The  last conclusion translates in terms of brackets
  to $[\mathfrak{s}, \h(x) ]
  \subset \h(x)$
(for any $x$).  By considering the projection
  $\pi: \g \to \mathfrak{s} $, one sees that if 
the projection of $\h(x)$ is non-trivial, then this projection is an  ideal  of $\mathfrak{s}$. In particular, since there are only finitely many ideals of $\mathfrak{s}$,  we get a factor  $\l$ contained in the projection of all $\h(x)$, $\forall x$.  By semi-simplicity, this gives $\l^\prime \cong \l$, a subalgebra of   $\g$ contained in all the $\h(x)$, contradicting the faithfulness of the $G$-action. Therefore $\h(x) $ is contained in 
$\mathfrak{c} + \r$, where $\mathfrak{c}$ is a compact semi-simple factor and $\r$ is the radical. 

At the Lie group level, let $G = S. C. R$, then, for the isotropy $H \subset C. R$. So, we have a well defined map 
$M =  G/H=  (S.  C. R)/H \to S$. By compactness of $M$, $S $ must be trivial. $\Box$

 \subsection{Simple case} 
 Recall the  result of \cite{ZCL}: 
 
 \begin{theorem} \cite{ZCL} A left invariant semi-Riemannian metric on a compact simple group, has a compact isometry group.

 \end{theorem}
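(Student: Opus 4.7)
I plan to derive the theorem from Theorem \ref{BGZ-Theorem} combined with Theorem \ref{OT}. Pass first to the universal cover: $\tilde K$ is compact because $K$ is compact simple, and $g$ lifts to a left invariant metric $\tilde g$; compactness of $\Isom(K,g)$ reduces to that of $G := \Isom^\circ(\tilde K, \tilde g)$. Since $(\tilde K, \tilde g)$ is a compact simply connected pseudo-Riemannian homogeneous space, Theorem \ref{BGZ-Theorem} yields a Levi decomposition $G = CR$ with $C$ compact and transitive on $\tilde K$, and abelian radical $R = A \times V$ where $A = \mathrm{Z}(G)^\circ$ is a compact torus and $V \cong \R^n$ is a $C$-module with $V^C = 0$. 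Since $C$ and $A$ are compact, it is enough to show $V = 0$.

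To describe $C$ precisely, I exploit simplicity. The left-translation copy $\tilde K \subset G$ meets $R$ in a normal solvable subgroup of the simple $\tilde K$, hence trivially, and therefore embeds into $C$ via the projection $G = CR \to C$. Theorem \ref{OT} applied to the compact connected supergroup $C$ of the compact simple $\tilde K$ gives, up to finite cover, $C \subset \tilde K \times \tilde K$; since the left-translation copy of $\tilde K$ corresponds to the direct factor $\tilde K \times \{1\}$, this forces $C = \tilde K \times L$ for some compact connected subgroup $L \subset \tilde K$. A dimension count, together with the identification of the right-translation action of $L$ on $\tilde K$, pins down the $C$-isotropy at $1 \in \tilde K$ as the diagonal $\Delta L \subset \tilde K \times L$.

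Now I argue by contradiction: suppose $V \neq 0$. Then BGZ Item (4) provides $H = (H \cap C) E$, where $E$ is the graph of a non-trivial homomorphism $\varphi : R \to C$ with $\varphi|_A$ injective. Writing $G = C \ltimes R$ and using the explicit form of $E$, one computes $R \cap H = \varphi^{-1}(\Delta L)$, hence $V \cap H = (\varphi|_V)^{-1}(\Delta L)$. If $\varphi(V) \subset \Delta L$, then $V \subset H$; as $V$ is normal in $G$, this would force $V$ to act trivially on $\tilde K = G/H$, contradicting the faithfulness of the isometric action. Hence $\varphi(V) \not\subset \Delta L$, so the composition of $\varphi|_V$ with the projection $C = \tilde K \times L \to \tilde K$ has non-trivial image.

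The main obstacle is ruling out this remaining case, and I expect the bulk of the work to lie there. My plan is to exploit the $\tilde K$-conjugation action on the graph $E$: although BGZ Item (4) only gives that $E$ is normal in $H$, and $\tilde K \not\subset H$, conjugating $E$ by elements of $\tilde K$ produces the isotropy subgroups at the various points of $\tilde K$, which are again of the form $(H \cap C)' E'$ for analogous graphs $E'$. Comparing these via the simple transitivity of the left-translation action translates into an equivariance-type constraint relating $\varphi|_V$ to the $C$-action on $V$. Together with the fact that $V$ is a non-trivial $C$-module (by $V^C = 0$) and that $\varphi|_V$ lands in a compact abelian subgroup of the semisimple $C = \tilde K \times L$, I expect Schur-type rigidity --- homomorphisms from a non-trivial simple-group representation into a torus must be trivial --- to force $\varphi(V) \subset \Delta L$ after all, closing the contradiction. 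The delicate point is turning this comparison of isotropies at different base points into a precise algebraic constraint on $\varphi|_V$, while handling the mixed $A$-$V$ contribution inside $R$ controlled by the injectivity of $\varphi|_A$.
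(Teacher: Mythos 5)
Your overall strategy (reduce to Theorem \ref{BGZ-Theorem} and show the radical cannot contribute a non-compact part) is the same as the paper's, but your tactical route differs and, more importantly, the decisive step is missing. The paper's proof of Proposition \ref{Proposition pseudo} does not go through Theorem \ref{OT} at all; instead it imports one more fact from \cite{BGZ1} (Proposition 9.6 there): $H\subset C_0R$, where $C_0$ is the kernel of the $C$-representation on $V$. Writing $C=C_0C_1$, the simple transitive subgroup $K$ lands in $C_0$ or $C_1$; the case $K\subset C_1$ dies by faithfulness (the isotropy of $K\ltimes R$ would be all of $R$), and the case $K\subset C_0$ means $K$ commutes with $R$, so $R$ acts by right translations via a homomorphism $R\to K$, the action extends to $K\times K$, and compactness follows. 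In your language, the containment $H\subset C_0R$ is exactly the statement that $\varphi(R)\subset C_0$, i.e.\ the constraint you are trying to manufacture.

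The gap is precisely at the point you flag yourself: ruling out the case where $\varphi|_V$ projects non-trivially to the $\tilde K$-factor of $C=\tilde K\times L$. What comes for free from Theorem \ref{BGZ-Theorem}(4) is only that $E$ is normal in $H$, which yields $\varphi(c\cdot v)=c\varphi(v)c^{-1}$ for $c\in H\cap C=\Delta L$ --- equivariance under the isotropy only. To run your ``Schur-type rigidity'' argument (closure of $\varphi(V)$ is a torus normalized, hence centralized, by a connected group; then $V^{C}=0$ kills $\varphi|_V$) you would need equivariance under all of $C$, or at least under the left copy of $\tilde K$. Conjugating $E$ by $k\in\tilde K$ produces the graph appearing in the isotropy at the point $k$, and nothing in the purely group-theoretic data forces that graph to coincide with $E$; the identification you need is a consequence of the metric (it is how \cite{BGZ1} proves $H\subset C_0R$), not of transitivity alone. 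So as written the contradiction does not close, and the argument cannot be completed without either re-proving Proposition 9.6 of \cite{BGZ1} or citing it. A secondary, fixable, imprecision: in the favorable case the right conclusion is not literally $V=0$ inside the abstract $G$, but that the $R$-action factors through right translations by a torus of $\tilde K$, so that the full isometry group is compact (which then forces $V=0$ a posteriori since $G=\Isom^\circ$). Your use of Theorem \ref{OT} to get $C=\tilde K\times L$ is legitimate but unnecessary --- and note the paper explicitly advertises that its proof avoids \cite{OT}.
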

 
 We will deduce this result from Theorem \ref{BGZ-Theorem}, without using neither 
 \cite{Gromov, DG} nor 
 \cite{OT}. 
 
 \begin{proposition} \label{Proposition pseudo}
 If $\dim R >0$, then there is no simple subgroup $K \subset G$ which acts transitively on $M$.
 
 \end{proposition}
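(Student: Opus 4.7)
The plan is to assume, for contradiction, that $K \subset G$ is simple and acts transitively on $M$ while $\dim R > 0$, and then to push the structural data of Theorem~\ref{BGZ-Theorem} until a clash emerges. First I will project $K$ to $C$ via $\pi : G \to G/R \cong C$: since $R$ is abelian by item~(3) and $K$ is simple non-abelian, $K \not\subset R$, so $\ker(\pi|_K)$ is discrete and $\pi(K) \subset C$ is simple of the same dimension as $K$. Writing $\g = \c \oplus \r$, the subalgebra $\k$ is then the graph of a linear map $\psi : \pi(\k) \to \r$, and the subalgebra condition makes $\psi$ a 1-cocycle for the $\pi(\k)$-representation on $\r$. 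Since $\pi(\k)$ is semisimple, Whitehead's first lemma gives $H^{1}(\pi(\k),\r)=0$, so $\psi = \rho(\cdot)v$ for some $v \in \r$, and hence $\k = \Ad(\exp(-v))\pi(\k)$. Conjugating $K$ by $\exp(-v) \in R$ preserves all hypotheses, so I may and will assume $K \subset C$ from this point on.

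Next I will extract rigid commutation relations from item~(4). With $H = L \cdot E$, $L = H \cap C$, and $E$ the graph of a nontrivial $\varphi : R \to C$ with $\varphi|_A$ injective, the normality of $L$ and $E$ in $H$ and the finiteness of $L \cap E$ force $[L, E] = 1$ in $G$. Unwinding this relation inside the semidirect product $G = C \ltimes R$ yields two facts: (a) $L$ centralises $\varphi(R)$ inside $C$, and (b) the restriction of $\rho$ to $L$ acts trivially on $R$. Combined with item~(2), which identifies $M$ with $C/L$, the $K$-transitivity (now with $K \subset C$) is equivalent to $K L = C$.

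The genuinely difficult step --- and the main obstacle --- is to convert the algebraic data above into a contradiction. My plan is to treat the two factors of $R = A \times V$ separately. On the $V$-side, property~(b) gives $L \subset \ker(\rho|_V)$, which is proper in $C$ whenever $V \ne 0$ because $V^{C}=0$; the relation $KL = C$ together with simplicity of $K$ then forces $K$ to act essentially faithfully on $V$. I will then invoke the dynamical input already used in the sketch of Theorem~\ref{BGZ-Theorem}: for $v \in \mathfrak{v}$ the operator $\ad(v)$ squares to zero, hence is skew for the symmetric form $m(x_0)$ on $\g$, and therefore preserves $\h = \ell + \e$. Unpacking this invariance in coordinates adapted to $\k \subset \c$ should produce an equation linking $\varphi$ and $\rho|_\k$ incompatible with the simplicity of $K$ and the injectivity of $\varphi|_A$ unless $V = 0$. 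On the $A$-side, $\varphi|_A$ produces a torus $\varphi(A) \subset C$ centralised by $L$ and meeting $L$ discretely (the faithfulness of the $G$-action on $M$ forces $(R \cap H)^\circ = 1$); its right-action on $C/L$ commutes with the left $K$-action and yields an abelian extension of the simple $K$ inside $\Isom^\circ(M)$. I expect the contradiction here to come from combining $KL = C$, the graph structure of $E$, and the requirement that $H$ contains no nontrivial connected normal subgroup of $G$. This compact-abelian subcase is the more delicate of the two, since no non-compact dynamics is available, and the rigidity must flow entirely from how the decomposition $C = KL$ interacts with the centraliser conditions coming from $[L, E] = 1$.
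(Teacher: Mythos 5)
There is a genuine gap: the two steps that are supposed to produce the contradiction are never carried out. On the $V$-side you stop at ``unpacking this invariance \ldots{} \emph{should} produce an equation \ldots{} incompatible with \ldots{}'', and on the $A$-side at ``I \emph{expect} the contradiction to come from \ldots{}''. These are announcements of a plan, not arguments, and the $A$-side plan cannot succeed as stated: for a Berger-type left-invariant metric on $\SU(2)$ that is right-$\U(1)$-invariant but not bi-invariant, $\Isom^\circ = \U(2)$ has radical the central circle, so $\dim R=1>0$ while the simple subgroup $\SU(2)$ acts transitively on the simply connected $M=\mathbb S^3$. Hence no purely algebraic contradiction is available from ``$KL=C$ plus the graph structure of $E$'' when $R$ reduces to the central torus $A$; what one actually proves in that case (and what the paper proves) is that $R$ acts by right translations through a homomorphism into $K$, so the whole isometry group sits inside $K\times K$ and is compact --- which is the conclusion needed to deduce the theorem of Zhu--Chen--Liang. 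A smaller but real error: $\ad(v)^2=0$ does not make $\ad(v)$ skew for the degenerate form $m(x_0)$; the skewness/invariance comes from the boundedness and Poincar\'e-recurrence argument in the sketch of Theorem \ref{BGZ-Theorem}, not from nilpotency.

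You also miss the structural input that makes the paper's proof short. The paper sets $C_0=\ker(C\to\GL(V))$, splits $C=C_0C_1$, and imports from \cite{BGZ1} (Prop.\ 9.6) that $H\subset C_0R$; your relations (a)--(b) recover the weaker fact $H\cap C\subset C_0$ but you never exploit the resulting dichotomy. Since $K$ is simple it lies in $C_0$ or in $C_1$ (after the Levi--Malcev conjugation, which your Whitehead computation correctly reproves). If $K\subset C_1$, then $H\cap(K\ltimes R)$ is essentially contained in $R$, and transitivity of $K$ forces this isotropy to contain all of $R$, contradicting faithfulness since $R$ is normal in $G$. If $K\subset C_0$, then $K$ commutes with $R$, and an $R$-action commuting with the (essentially free) transitive $K$-action is given by right translations via a homomorphism $R\to K$, whence compactness. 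I would recommend restructuring your argument around this case split rather than trying to force a contradiction out of the cocycle data for $\varphi$ and $\rho$.
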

 
 \begin{proof}

 Let $C_0$ be the kernel of the representation of $C$ in $V$. This is a normal subgroup of $C$
 and we have a splitting $C = C_0 C_1$.  From Proposition 9. 6 (see also the proof of   Lemma 10.3) of 
 \cite{BGZ1}, we have that $H \subset C_0R$.  In particular if $C_0 = 1$, then $H \subset R$, but this is impossible since 
 $C$ acts transitively on $M$, unless $H = R$, which is also impossible since the $G$ action is (tacitly!) assumed
 to be faithful. 
 
 Assume now that $G$ contains a simple Lie group $K$ acting transitively on $M$. Then up to conjugacy, 
$K$ is a subgroup of $C$, and by simplicity, it is either in $C_0$ or in $C_1$.

Assume $K \subset C_1$. Consider $G_1 = K \ltimes R$. The isotropy $H_1 = G_1 \cap H$ is contained in $R$, since $H \subset C_0 R$. But $K$ acts transitively on $M$, so the isotropy $H_1$ must be equal to $R$, which contradicts faithfulness.

Therefore $K \subset C_0$. Consider the direct product $G_2 = K \times R$. So, on $M$, the $R$-action commutes with the transitive action of $K$.  \\
If the $K$-action on $M$ is free, in which case $M$ is identified to $K$ acting 
by left translation on itself, then $R$ must act on the right 
 via a homomorphism in $K$. So the $K \times R$-action extends to a $K \times K$-action, and thus the isometry group is compact. Finally, the case where the $K$-action on $M$ is not free,  works similarly, with a slightly more complicated notations. 
  \end{proof}

   \section{Non-simple examples}  \label{Non simple}
 
       \subsection{Riemannian Non-simple  example}  \cite{Ozeki}
   \label{non-simple case} 
 Let $L$ be any group. Embed it       
         in $L^3$ as  $A = \{(x, x, 1) / x \in L\}$, and embed $L^2$  as 
       $B = \{ (x, y, x)/ l_1, l_2 \in L\}$.  Any element of $L^3$ can be uniquely decomposed as a product of an element of 
       $A$ and an element of $B$. So $L^3$ is acts on $L^3/A$, which is identified to $B$, and on $L^3/B$ which is identified to $A$. 
       Observe however that the $L^3$-action on $A$ is not faithful. The $L^3$-action on $B \cong L^2$ is however faithful and this supergroup of $B = L^2$ is not contained (up to covers) in $B \times B$.

\subsection{Non-Riemannian non-simple example} \cite{BGZ1}
\label{ex:not_easy_group}
Let 
$G_1=(\widetilde{\SO}(3)\ltimes\R^3)\times\T^3$,
where $\widetilde{\SO}(3)$ acts on $\R^3$ by the adjoint  representation ($\R^3 \cong \so(3)$)
and let $\Phi:\R^3\to\T^3$
be a homomorphism with discrete kernel. Note $V_1$ (resp. $V_0$) the Lie algebra of $\R^3$ (resp. $\T^3$). 

Put
$H = \{(1,v,\Phi(v))\mid v\in\R^3\} 
$. Its Lie sub-algebra is $\h= \{ (0, v, \varphi(v)), v \in V_1\}$, where $\varphi: V_1 \to V_0$ is the derivative of $\Phi$.
Define  a pseudo-product  $ \langle  ,  \rangle $ on $\g_1$ by: 

- $\so(3)$ and $V_0 \oplus V_1$ are (totally) isotropic.

- if $u \in \so(3), v_0 \in V_0, v_1 \in v_1$, then $\langle u, v_0 + v_1 \rangle =  \kappa(u, v_1 + \varphi^{-1}(v_0))  $, where $\kappa$ is the Killing form of $\so(3)$, and $V_0$  and $V_1$ are  identified to $\so(3)$. 

One can check that the kernel of this product is exactly $\h$, and that the so defined product on $\g_1/\h$ has signature $(3, 3)$.
 Also, this product is $\Ad (H)$-invariant. All these properties are a particular case of the following general construction. Let $L$ any group,
 with $\l$ its Lie algebra and $\l^*$ its dual. Consider the semi-direct product $ P= L \ltimes \l^*$. Its Lie algebra as a vector space 
 is $\p =  \l \oplus \l^*$. The paring of $\l$ and its dual $\l^*$, that is $k(x, \alpha) = \alpha(x), x \in \l, \alpha \in \l^*$, determines a pseudo-scalar product 
 on $\p$ of signature $(d, d)$, $d = \dim L$, which is in fact $\Ad(P)$-invariant.

%The Lie algebras $\g_1$ of $G_1$ and $\h$ of $H$ are the
%corresponding Lie algebras from Example \ref{ex:not_easy}.

%We can extend the nil-invariant scalar product $\met$ on $\g_1$
%from Example \ref{ex:not_easy} to a left-invariant tensor on $G_1$,

From this scalar product on $\g_1/\h$, we get a $G_1$-invariant semi-Riemannian metric 
of signature $(3, 3)$ on $M_1= G_1/H$. This $M_1$ is identified to $\widetilde{\SO}(3) \times \T^3$.

and thus obtain a $G_1$-invariant pseudo-Riemannian metric of signature
$(3,3)$ on the quotient $M_1=G_1/H=\widetilde{\SO}(3)\times\T^3$.
Here, $M_1$ is a non-simply connected manifold with a non-compact
connected stabilizer.

In order to obtain a simply connected example, embed $\T^3$ in a simply
connected compact semisimple group $C_0$, for example
$C_0=\widetilde{\SO}(6)$, so that $G_1$ is embedded in
$G=(\widetilde{\SO}(3) \ltimes \R^3) \times C_0$.

%As in Example \ref{ex:not_easy2}, we can extend $\met$ from $\g_1$
The previously defined scalar product on $\g_1$ can be extended to $\g$ as follows.
Choose $\t^\prime \subset \c_0$, as a $\T^3$-invariant  supplementary subspace of the Lie algebra of $\T^3$ in that of $C_0$, and endow it with 
a positive scalar product.  Then, equip  $\g = \g_1 \oplus \t^\prime$, with the direct sum of scalar products. This is
$\Ad(H)$-invariant. We therefore get $M=G/H \cong \widetilde{\SO}(3)\times C_0$. 
Therefore $\widetilde{\SO}(3) \times \widetilde{\SO}(6)$ admits a left invariant semi-Riemannian metric  having a  non-compact 
isometry group.

\section{More results and questions}

Let us end with the following questions, some of which are good exercises.

\subsubsection{Finite isometry groups} For a compact homogeneous semi-Riemannian manifold, the isometry group 
has finitely many connected components. Observe that all results here concern the identity component. For instance, 
for a simple group $K$,  with a left invariant metric $g$, a priori, it might happen that the isotropy at the identity contains a finite group 
acting by isometry that are not automorphisms?

\subsubsection{Non-simple case} 
 There is in fact in \cite{BGZ1} more details  about isometry groups of compact simply connected
 semi-Riemannian spaces, 
 which might allow one to  an optimal classification
 of compact simply connected homogeneous semi-Riemannian manifolds, 
 in particular in the case where $M$ is identified to a  compact semi-simple Lie group. Also
 Ozeki's \cite{Ozeki} and Koszul's \cite{Koszul} results might be helpful in this regard.

    \subsubsection{Non-group case} Our proof  of Proposition \ref{Proposition pseudo} applies also to semi-Riemannian homogeneous
    spaces of simple groups, that is,  manifolds $M = K/P$, where $K$ is a compact simple group. Their isometry group is compact.

    \subsubsection{Non semi-simple case}  The conformal Theorem \ref{Conformal Theorem} generalizes,  up to a slight  modification, to the case where 
    $K$ is compact but not necessarily semi-simple. So $K$ is, up to a finite cover, the product of a semi-simple by a torus.
    As example, we have  $\SO(2)  \times \SU(2)$    whose conformal group 
is  $\SO(2, 4)$.  

    \subsubsection{The non-compact Riemannian case} For  a semi-simple group $S$  endowed with 
    a left invariant Riemannian metric $g$, $S$ is co-compact in $\Isom(S, g)$. But if $S$ contains 
    no compact factor, then it is cocompact only in groups of the form $S \times L$, with $L$ compact. In particular
    $S$ cannot be co-compact in another different semi-simple group without compact factors (see \cite{Gordon} for 
    proofs)
    
    %$\Isom(S, g)$  is a product 
   
    \subsubsection{The non-compact semi-Riemannian case}  If $S$ is simple non-compact, a semi-Riemannian left invariant metric can have a large  isometry group, say where $S$ is not compact, the is the isotropy (at the identity) is not compact.
    As example, the Killing form, determines a  bi-invariant metric, the identity component of its  isometry group 
    is $S \times S$, modulo the center.  Here, one can ask if it is a maximally symmetric metric as in Theorem  \ref{maximally symmetric}.

 %\begin{enumerate}

%\item  It is interesting to see if this  result can be  proved ``algebraically'', that is without using  the results  on the homogeneous Lichnerowicz conjecture?

 \end{document}